\definecolor{darkblue}{rgb}{0.0, 0.0, 0.55}
\newtheorem{theorem}{Theorem}[section]
\newtheorem{lemma}[theorem]{Lemma}
\newtheorem{assumption}{Assumption}[section]
\newtheorem{remark}[theorem]{Remark}
\newcommand{\eremk}{\hbox{}\hfill\rule{0.8ex}{0.8ex}}
\newcommand{\R}{\mathbb{R}}
\newcommand{\N}{\mathbb{N}}
\newcommand{\ds}{\, \mathrm{d}s}
\newcommand{\dt}{\, \mathrm{d}t}
\newcommand{\cont}{\mathrm{cont}}
\newcommand{\Tt}{\mathcal{T}_{\tau}}
\newcommand{\In}{I_n}
\newcommand{\Inmo}{I_{n-1}}
\newcommand{\tn}{t_n}
\newcommand{\tnmo}{t_{n-1}}
\newcommand{\Nt}{N_{\tau}}
\newcommand{\tNt}{t_{N_{\tau}}}
\newcommand{\jumpn}[1]{\llbracket #1 \rrbracket_{n}}
\newcommand{\Rt}{\mathcal{R}_{\tau}}
\newcommand{\CR}{C_{\mathcal{R}}}
\newcommand{\Ctr}{C_{\mathrm{tr}}}
\newcommand{\Cinv}{C_{\mathrm{inv}}}
\newcommand{\Csca}{C_{\mathrm{sca}}}
\newcommand{\Cj}{C_J}
\newcommand{\ut}{u_{\tau}}
\newcommand{\vt}{v_{\tau}}
\newcommand{\Vt}{\mathcal{P}_{\tau}}
\newcommand{\calL}{\mathcal{L}}
\newcommand{\calU}{\mathcal{U}}
\newcommand{\calF}{\mathcal{F}}
\newcommand{\Pp}[2]{\mathcal{P}^{#1}(#2)}
\newlength{\dhatheight}
\newcommand{\dpt}{\partial_t}
\newcommand{\Norm}[2]{\|#1\|_{#2}}
\newcommand{\sh}{\sigma_{\delta}}
\newcommand{\jump}[1]{\llbracket #1 \rrbracket}
\newcommand{\bx}{\boldsymbol{x}}
\newcommand{\Ct}{C_{\theta}}
\numberwithin{equation}{section}
\title{A note on the compactness properties of\\ 
discontinuous Galerkin time discretizations} 
\author{Sergio G\'omez\thanks{Department of Mathematics and Applications, University of Milano-Bicocca, Via Cozzi 55, 20125 Milan, Italy (\href{mailto:sergio.gomezmacias@unimib.it}{sergio.gomezmacias@unimib.it})}
\thanks{IMATI-CNR ``E. Magenes", Via Ferrata 5, 27100 Pavia, Italy}\ \orcidlink{0000-0001-9156-5135}}
\date{}
\begin{document}

\maketitle

\begin{abstract}
\noindent This work extends the discrete compactness results of Walkington (SIAM J. Numer. Anal., 47(6):4680--4710, 2010) for high-order discontinuous Galerkin time discretizations of parabolic problems to more general function space settings. 
In particular, we show a discrete version of the Aubin--Lions--Simon lemma that holds for general Banach spaces~$X$, $B$, and~$Y$ satisfying~$X \hookrightarrow B$ compactly and~$B \hookrightarrow Y$ continuously. 
Our proofs rely on the properties of a time reconstruction operator and remove the need for quasi-uniform time partitions assumed in previous works.
Thus, we provide a useful and flexible tool for the analysis of high-order discontinuous Galerkin time discretizations of complex nonlinear partial differential equations.
\end{abstract}

\paragraph{Keywords.} high-order discontinuous Galerkin, discrete compactness, Aubin--Lions--Simon lemma, time reconstruction operator

\paragraph{Mathematics Subject Classification.} 46B50, 35A35, 65M12, 65M60

\section{Introduction}
The aim of this work is to present some discrete compactness results for sequences of (possibly discontinuous) piecewise polynomial functions in time. 
In particular, we show an extension of~\cite[Thm.~3.1]{Walkington:2010} to a more general variational framework. Moreover, our proofs are simpler and avoid the requirement of (global) quasi-uniformity of the time partitions, as they rely on continuous-in-time reconstructions that allow us to use compactness results from the continuous setting. Our main result (Theorem~\ref{thm:main}) can also be seen as a high-order generalization of~\cite[Thm.~1]{Dreher_Jungel:2012}, which addressed the case of piecewise-constant functions in time.

\paragraph{Previous works.} Using discrete compactness for sequences of piecewise-constant functions in time is a standard tool not only to show existence of weak solutions to nonlinear evolution problems, but also to prove convergence of numerical schemes; see, for instance, applications in flow dynamics~\cite{Gallouet_Latche:2012}, nonlinear cross-diffusion systems~\cite{Andreianov_Bendahmane_Ruiz-Baier:2011,Jungel_Zurek:2023,Gomez-Jungel-Perugia:2025}, and hyperbolic--parabolic problems~\cite{Andreianov_Bendahmane_Karlsen:2010}. Such a technique is particularly useful when deriving \emph{a priori} error estimates is too challenging, as well as to show convergence in mesh-independent norms under minimal regularity assumptions.

In contrast, the literature on discrete compactness results for higher-order piecewise polynomial functions in time is scarcer. To the best of our knowledge, Walkington's result in~\cite[Thm.~3.1]{Walkington:2010} was the first of this type. Subsequently, a variation used for the miscible displacement problem was presented in~\cite[Thm.~3.6]{Riviere_Walkington:2011}, and an extension to nonconforming approximations in space was established in~\cite[Thm.~3.2]{Li_Riviere_Walkington:2015}.
The main idea of such results was to show that discontinuous Galerkin (DG) approximations in time are uniformly equicontinuous (see~\cite[Lemma 3.3]{Walkington:2010}), thus circumventing the lack of differentiability of such discrete functions, at the expense of assuming quasi-uniform time partitions.
These results have been used for high-order DG time discretizations of reaction--diffusion problems~\cite{Chrysafinos_Filopoulos_Papathanasiou:2013,Chrysafinos:2019}, incompressible fluid dynamics~\cite{Kirk_Cesmelioglu_Rhebergen:2023,Gazca-Orozco_Kaltenbach:2025}, porous media~\cite{Riviere_Walkington:2011,Li_Riviere_Walkington:2015,Girault_Riviere_Cappanera:2021}, and optimal control~\cite{Chrysafinos:2013,Casas_Chrysafinos:2016}.

Recently, in~\cite[Thm.~A.1]{Kirk_Cesmelioglu_Rhebergen:2023}, it was shown that some of the assumptions in~\cite[Thm.~3.1]{Walkington:2010} and~\cite[Thm.~3.2]{Li_Riviere_Walkington:2015} can be replaced by conditions on the time derivative of the continuous-in-time reconstructions introduced in~\cite[\S2.1]{Makridakis_Nochetto:2006}. Following this approach, we show that the equicontinuity argument can be avoided, as discrete compactness can instead be established by only exploiting the properties of the reconstruction operator.

\paragraph{Compactness results for continuous functions in time.}
We first recall some standard results for the continuous setting, which we use in the proof of Theorem~\ref{thm:main} below. The Aubin--Lions lemma establishes some sufficient conditions to prove that a set of functions~$\calU$ is relatively compact in~$L^p(0, T; B)$, where~$p \in [1, \infty)$, $T > 0$, and~$B$ is a Banach space, 
i.e., under these conditions, any bounded sequence in~$\calU$ has a subsequence that converges strongly in~$L^p(0, T; B)$. The following version from~\cite[Cor.~4]{Simon:1987} is called the Aubin--Lions--Simon lemma, as Simon established the result without the assumption in~\cite[Thm.~1]{Aubin:1963} and\cite[Thm.~I.5.1]{Lions:1969} that the Banach spaces involved are reflexive.

\begin{theorem}[Aubin--Lions--Simon]
\label{thm:aubin-lions-simon}
Let~$X$, $B$, and~$Y$ be Banach spaces with
\begin{equation*}
    X \hookrightarrow B \text{ compactly} \quad \text{and} \quad B \hookrightarrow Y \text{ continuously}.
\end{equation*}
Let also~$p,\, r \in [1, \infty]$, and~$\calU \subset L^p(0, T; X)$ be a set of functions such that
\begin{subequations}
\begin{alignat}{3}
\label{eq:calU}
    \calU & \text{ is bounded in~$L^p(0, T; X)$}, \\
\label{eq:dpt-calU}
    \dpt \calU := \big\{\dpt u\ : \ u \in \calU \big\} & \text{ is bounded in~$L^r(0, T; Y)$}.
\end{alignat}
\end{subequations}
\begin{itemize}
\item If~$1 \le p < \infty$, then~$\calU$ is relatively compact in~$L^p(0, T; B)$.
\item If~$p = \infty$ and~$r > 1$, then~$\calU$ is relatively compact in~$C^0([0, T]; B)$.
\end{itemize}
\end{theorem}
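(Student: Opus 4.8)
The plan is to reduce both statements to classical compactness criteria---the Fr\'echet--Kolmogorov (Simon) criterion for $L^p(0,T;B)$ and the Arzel\`a--Ascoli theorem for $C^0([0,T];B)$---with the compact embedding $X \hookrightarrow B$ entering through a single interpolation inequality. First I would establish the Lions-type estimate: for every $\eta > 0$ there is a constant $C_\eta > 0$ such that
\begin{equation*}
\Norm{v}{B} \le \eta \Norm{v}{X} + C_\eta \Norm{v}{Y} \qquad \text{for all } v \in X.
\end{equation*}
I would prove this by contradiction: if it failed for some $\eta > 0$, there would be a sequence $(v_n) \subset X$ with $\Norm{v_n}{B} = 1$ and $1 > \eta \Norm{v_n}{X} + n \Norm{v_n}{Y}$; then $(v_n)$ is bounded in $X$ and $\Norm{v_n}{Y} \to 0$, so the compact embedding $X \hookrightarrow B$ yields a subsequence converging in $B$ to some $v$, while the continuity of $B \hookrightarrow Y$ forces $v = 0$, contradicting $\Norm{v_n}{B} = 1$.

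For the case $1 \le p < \infty$ I would invoke Simon's characterization of relative compactness in $L^p(0,T;B)$, which rests on two properties. The first is that the time averages $\{\int_{t_1}^{t_2} u(s)\ds : u \in \calU\}$ are relatively compact in $B$ for all $0 \le t_1 < t_2 \le T$; by H\"older's inequality and~\eqref{eq:calU} these integrals are uniformly bounded in $X$, hence relatively compact in $B$ by the compact embedding. The second is the uniform vanishing of time translations. To obtain it I would first control translations in the weaker norm $Y$ by writing $u(t+h) - u(t) = \int_t^{t+h} \dpt u(s)\ds$ and applying H\"older together with~\eqref{eq:dpt-calU}, which gives $\Norm{u(t+h)-u(t)}{Y} \le h^{1 - 1/r}\Norm{\dpt u}{L^r(t, t+h; Y)}$, so that, after integrating in time, the $Y$-translations vanish uniformly. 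Applying the interpolation inequality to $v = u(t+h)-u(t)$ and integrating then yields
\begin{equation*}
\Norm{u(\cdot+h) - u}{L^p(0,T-h;B)} \le \eta \Norm{u(\cdot+h)-u}{L^p(0,T-h;X)} + C_\eta \Norm{u(\cdot+h)-u}{L^p(0,T-h;Y)};
\end{equation*}
the first term is at most $2\eta$ times the uniform $L^p(0,T;X)$-bound, while the second tends to $0$ uniformly in $u \in \calU$. Since $\eta > 0$ is arbitrary, the translations vanish uniformly, and Simon's criterion delivers relative compactness in $L^p(0,T;B)$.

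For the endpoint case $p = \infty$ with $r > 1$ I would instead use Arzel\`a--Ascoli in $C^0([0,T];B)$. The assumption $r > 1$ makes the exponent $1 - 1/r$ strictly positive, so the estimate above shows that each $u \in \calU$ has a representative that is uniformly H\"older continuous into $Y$; combining this $Y$-modulus of continuity with the interpolation inequality and the uniform $L^\infty(0,T;X)$-bound gives uniform equicontinuity of $\calU$ into $B$. For the pointwise relative compactness I would note that, for almost every $t$, $\Norm{u(t)}{X}$ is uniformly bounded, so the continuous-in-$Y$ representative keeps $u(t)$ inside a fixed compact subset of $B$ for every $t$; hence $\{u(t) : u \in \calU\}$ is relatively compact in $B$ for each $t$, and Arzel\`a--Ascoli yields relative compactness in $C^0([0,T];B)$.

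The main delicate point is that all of these properties must be made uniform over the entire family $\calU$ at once, and, in the case $p = \infty$, that one must first pass to a continuous-in-$Y$ representative before the pointwise values $u(t)$ and their $X$-bounds become meaningful (here one uses that $X$-bounded sets are relatively compact in $B$, hence closed in $Y$). The interpolation inequality is the conceptual heart of the argument, as it is the only place where the compactness of $X \hookrightarrow B$ is invoked; everything else amounts to combining it with the translation estimate coming from the uniform control of $\dpt \calU$. I expect the verification of Simon's averaging condition and the careful treatment of representatives in the endpoint case to be the most technical steps, whereas the translation estimates are routine once the interpolation inequality is in hand.
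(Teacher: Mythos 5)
This theorem is not proved in the paper at all: it is recalled verbatim from Simon's work (cited as \cite[Cor.~4]{Simon:1987}) and used as a black box in the proof of Theorem~\ref{thm:main}. Your proposal is therefore not comparable to any argument in the paper; judged on its own, it is essentially the classical proof of the cited result, and it is correct in outline. The Ehrling-type inequality $\Norm{v}{B} \le \eta \Norm{v}{X} + C_\eta \Norm{v}{Y}$ with its contradiction proof, the verification of the two conditions of Simon's compactness criterion (relative compactness in $B$ of the time averages via the compact embedding, and uniform decay of translations via interpolation between the $X$-bound and the $Y$-translation estimate), and the Arzel\`a--Ascoli argument for the endpoint case are exactly the standard route. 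Two points deserve more care than your sketch gives them. First, for $r = 1$ the exponent $1 - 1/r$ vanishes, so the pointwise bound $\Norm{u(t+h)-u(t)}{Y} \le h^{1-1/r}\Norm{\dpt u}{L^r(t,t+h;Y)}$ gives no decay; the uniform vanishing of $\Norm{u(\cdot+h)-u}{L^p(0,T-h;Y)}$ then requires the Fubini argument $\int_0^{T-h}\int_t^{t+h}\Norm{\dpt u(s)}{Y}\ds\dt \le h \Norm{\dpt u}{L^1(0,T;Y)}$ combined with the uniform $L^\infty$-in-$t$ bound on the translation, yielding a rate $h^{1/p}$. Second, in the case $p=\infty$ the passage from the a.e.\ bound $\Norm{u(t)}{X}\le M$ to the statement that the continuous-in-$Y$ representative takes \emph{all} its values in a fixed compact subset of $B$ (and is in fact continuous into $B$) uses that the $B$-closure of the $X$-ball is compact in $B$ and that $B\hookrightarrow Y$ is injective, so that $Y$-limits of sequences in this compact set determine their $B$-limits; you flag this issue but do not carry it out. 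Neither point is a genuine gap, only a place where the details are nontrivial.
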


\noindent In~\cite[Thm.~5]{Simon:1987}, Simon showed that condition~\eqref{eq:dpt-calU} on~$\calU$ can be weakened to
\begin{equation}
\label{eq:shifted-condition}
\lim_{\delta \to 0^+} \Norm{\sh u - u}{L^r(0, T - \delta; Y)} = 0, \quad \text{ uniformly for~$u \in \calU$},
\end{equation}
where the shift operator~$\sh u(\cdot, t) := u(\cdot, t + \delta)$ for~$t \in [0, T - \delta]$. 
This version with weaker assumptions is particularly useful when dealing with (possibly discontinuous) piecewise polynomial functions in time, as they are not differentiable and hence condition~\eqref{eq:dpt-calU} cannot hold. However, verifying~\eqref{eq:shifted-condition} for all time shifts~$\delta > 0$ can be cumbersome and conditions that can be deduced directly from the bounds on the discrete solutions are more convenient (see~\cite{Dreher_Jungel:2012}).

Next theorem concerns relative compactness of higher regularity under some slightly more restrictive assumptions; see~\cite[Thm.~1.1]{Amann:2000}.
\begin{theorem}[Amann]
\label{thm:amann}
Let~$X$, $B$, and~$Y$ be Banach spaces such that
\begin{subequations}
\begin{alignat}{3}
\label{eq:amann-1}
X \hookrightarrow B \ \text{ and } \ B \hookrightarrow Y & \text{ continuously}, \\
\label{eq:amann-2}
X \hookrightarrow Y \ & \text{ compactly}, \\
\nonumber
\text{for some~$\theta \in (0, 1)$, there exists~$\Ct > 0$ such that} \\
\label{eq:amann-3}
\Norm{u}{B} \le \Ct \Norm{u}{X}^{\theta} & \Norm{u}{Y}^{1-\theta} \qquad \text{for all~$u \in X$}.
\end{alignat}
\end{subequations}
Let~$\calU \subset L^p(0, T; X)$ be a set of functions satisfying condition~\eqref{eq:calU} and either
\begin{itemize}
    \item condition~\eqref{eq:dpt-calU} (setting~$s = 1$), or
    \item for~$p \in [1, \infty)$, $r \le p$, and some~$0 < s < 1$, there is~$C_s > 0$ such that
    \begin{equation}
        \label{eq:Amann-condition}
        \Norm{\sh u - u}{L^r(0, T - \delta; Y)} \le C_s \delta^{s} \quad \text{ for all~$\delta > 0$ and~$u \in \calU$.}
    \end{equation}
\end{itemize}
Then, $\calU$ is relatively compact in~$L^q(0, T; B)$ for all~$1 \le q < rp/((1-\theta)(1 - s r) p + \theta r)$ provided that $(sr - 1)p/r \le \theta/(1- \theta)$.
\end{theorem}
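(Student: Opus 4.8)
The plan is to deduce the theorem by combining a sharp \emph{continuous} embedding into $L^{q_{\max}}(0,T;B)$, where $q_{\max} := rp/((1-\theta)(1-sr)p + \theta r)$, with \emph{compactness} at one lower integrability exponent, and then interpolating the two. The crucial point is that the gain in time-integrability up to $q_{\max}$ cannot come from a pointwise application of~\eqref{eq:amann-3} (which stalls at $q = p/\theta$), but must be extracted from a Sobolev embedding in the time variable; so the first task is to recast the hypotheses as a bound in a fractional Sobolev--Bochner space.

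First I would reframe the time regularity. Condition~\eqref{eq:Amann-condition} states precisely that $\calU$ is uniformly bounded in the Nikolskii space $N^{s,r}(0,T;Y)$, with seminorm $\sup_{\delta > 0}\delta^{-s}\Norm{\sh u - u}{L^r(0,T-\delta;Y)}$, while~\eqref{eq:dpt-calU} gives the same with $s = 1$ via $\Norm{\sh u - u}{L^r(0,T-\delta;Y)} \le \delta\Norm{\dpt u}{L^r(0,T;Y)}$. I then interpolate this against~\eqref{eq:calU} at the level of difference quotients: applying~\eqref{eq:amann-3} to $\sh u - u$ pointwise in time and then H\"older's inequality in time with exponents $p/(\rho\theta)$ and its conjugate, where $1/\rho := \theta/p + (1-\theta)/r$ (so that $\rho\theta \le p$ automatically, since $r \le p$ forces $\rho \le p$), gives
\begin{equation*}
\Norm{\sh u - u}{L^{\rho}(0,T-\delta;B)} \le \Ct\,2^{\theta}\Norm{u}{L^p(0,T;X)}^{\theta}\,\Norm{\sh u - u}{L^r(0,T-\delta;Y)}^{1-\theta}.
\end{equation*}
Inserting the $N^{s,r}(0,T;Y)$ bound shows that $\calU$ is uniformly bounded in $N^{(1-\theta)s,\rho}(0,T;B)$.

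Next I would apply the Sobolev embedding in time. Setting $g(t) := \Norm{u(t)}{B}$, the inequality $|g(t) - g(\tau)| \le \Norm{u(t) - u(\tau)}{B}$ transfers the $N^{(1-\theta)s,\rho}$ bound to the scalar function $g$, and the scalar Nikolskii--Sobolev embedding $N^{(1-\theta)s,\rho}(0,T) \hookrightarrow L^{q_{\max}}(0,T)$ yields a uniform bound of $\calU$ in $L^{q_{\max}}(0,T;B)$, with $1/q_{\max} = 1/\rho - (1-\theta)s = \theta/p + (1-\theta)(1-sr)/r$. A direct computation shows that the subcriticality condition $(1-\theta)s\rho \le 1$ needed for this embedding is \emph{exactly} the stated proviso $(sr-1)p/r \le \theta/(1-\theta)$. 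For the compactness, I would invoke Simon's criterion in $L^{\rho}(0,T;B)$: the time averages $\int_{t_1}^{t_2} u\dt$ are bounded in $X$, hence---by the compact embedding~\eqref{eq:amann-2} followed by a further use of~\eqref{eq:amann-3}---precompact in $B$, while the $N^{(1-\theta)s,\rho}(0,T;B)$ bound supplies the uniform shift control $\Norm{\sh u - u}{L^{\rho}(0,T-\delta;B)} \le C\delta^{(1-\theta)s} \to 0$. This gives relative compactness in $L^{\rho}(0,T;B)$; since $\rho < q_{\max}$, interpolating the resulting strong convergence against the uniform $L^{q_{\max}}(0,T;B)$ bound produces strong convergence in $L^q(0,T;B)$ for every $q < q_{\max}$.

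The step I expect to be the main obstacle is the passage from the difference-quotient interpolation to the sharp integrability exponent: making the vector-valued Nikolskii--Sobolev embedding rigorous with the critical exponent, and verifying that the proviso is precisely its subcriticality threshold. The regime $sr > 1$ deserves particular care, since there the naive pointwise interpolation stalls at $p/\theta$ while the time-Sobolev mechanism still reaches the larger $q_{\max}$; the borderline case $(1-\theta)s\rho = 1$ must also be treated separately. The second delicate point is the compactness upgrade---transferring the compact spatial embedding $X \hookrightarrow Y$ through~\eqref{eq:amann-3} to the intermediate space $B$, and then spreading it across the whole open range $q < q_{\max}$ by Lebesgue--Bochner interpolation. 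By contrast, the pointwise use of~\eqref{eq:amann-3}, the H\"older bookkeeping, and the verification of Simon's integral-precompactness condition are routine.
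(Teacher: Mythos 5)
This theorem is not proved in the paper: it is recalled verbatim from Amann (cited as \cite[Thm.~1.1]{Amann:2000}) as a tool for Theorem~\ref{thm:main}, so there is no in-paper argument to compare against. Judged on its own merits, your proposal is the correct strategy and is essentially the one underlying Amann's result: recast \eqref{eq:dpt-calU}/\eqref{eq:Amann-condition} as a uniform Nikolskii (Besov $B^{s}_{r,\infty}$) bound in $Y$, interpolate pointwise via \eqref{eq:amann-3} and H\"older in time to get a uniform $N^{(1-\theta)s,\rho}(0,T;B)$ bound with $1/\rho=\theta/p+(1-\theta)/r$, then use the one-dimensional Nikolskii--Sobolev embedding to reach the integrability exponent and Simon's criterion (time averages precompact in $B$ by \eqref{eq:amann-2} plus the Ehrling-type consequence of \eqref{eq:amann-3}, shift control from the $N^{(1-\theta)s,\rho}$ bound) for compactness at the lower exponent, finishing by Lebesgue interpolation. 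Your bookkeeping checks out: $1/q_{\max}=1/\rho-(1-\theta)s$ reproduces the stated $q_{\max}=rp/((1-\theta)(1-sr)p+\theta r)$, and the proviso $(sr-1)p/r\le\theta/(1-\theta)$ is exactly $(1-\theta)s\rho\le 1$, i.e.\ nonnegativity of $1/q_{\max}$. The two caveats you flag are the right ones: at the critical exponent $B^{\sigma}_{\rho,\infty}$ embeds only into weak-$L^{q_{\max}}$, so you obtain a uniform $L^{q_{\max},\infty}$ (not $L^{q_{\max}}$) bound, but this still suffices to spread strong convergence over the whole open range $q<q_{\max}$; and the vector-valued embedding is best reduced to the scalar one via $t\mapsto\Norm{u(t)}{B}$ exactly as you do. I see no gap that would invalidate the stated conclusion.
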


\begin{remark}
\label{rmk:walkington}
If~$X$ is a Banach space, $B$ is a Hilbert space, and we have a Gelfand triplet~$X \hookrightarrow B \hookrightarrow X'$ with dense compact embeddings, then condition~\eqref{eq:amann-3} holds with~$Y = X'$, $\theta = 1/2$, and~$C_{\theta} = 1$. In this setting, if condition~\eqref{eq:Amann-condition} holds with~$r = 1$ for all~$0 < s < 1$, then Theorem~\ref{thm:amann} implies that~$\calU$ is relatively compact in~$L^q(0, T; B)$ for~$1 \le q < 2p$. 
The result in~\cite[Thm.~3.1(1)]{Walkington:2010} for the high-order DG time stepping can be seen as a discrete counterpart of this particular situation.
\eremk
\end{remark}

\section{Discontinuous Galerkin time discretizations}
In Section~\ref{sec:DG-notation}, we introduce some DG notation and define spaces of piecewise polynomial functions in time. 
In Section~\ref{sec:time-reconstruction}, we present the definition and main properties of the time reconstruction operator used in the proof of 
our discrete compactness result in Section~\ref{sec:compactness}.
A brief discussion of the application of~Theorem~\ref{thm:main} to parabolic problems is 
presented in Section~\ref{sec:parabolic}.

\subsection{DG notation and piecewise polynomial spaces}
\label{sec:DG-notation}
Let~$X$ be a Banach space, and let~$\{\Tt\}_{\tau > 0}$ be a family of partitions of the time interval~$(0, T)$, where each~$\Tt$ is determined by some nodes~$0 =: t_0 < \ldots < \tNt := T$. For~$n = 1, \ldots, N_{\tau}$, we define the time interval~$\In := (\tnmo, \tn)$ and the time step~$\tau_n := \tn - \tnmo$. The subscript~$\tau$ stands for the maximum time step, i.e., $\tau := \max_{n = 1, \ldots, \Nt} \tau_n$. We make the following assumption on~$\{\Tt\}_{\tau > 0}$, which is less restrictive than the (global) quasi-uniformity assumption in~\cite[Thm.~3.1]{Walkington:2010}, \cite[Thm.~3.6]{Riviere_Walkington:2011}, and~\cite[Thm.~3.2]{Li_Riviere_Walkington:2015}. 
In particular, this condition holds for families of geometrically refined partitions in time, which can be used to resolve temporal singularities (see, e.g., \cite{Schotzau_Schwab:2000,Schotzau_Schwab-b:2000}). 

\begin{assumption}[Time steps ratio]
\label{asm:local-quasi-uniformity}
There exists a positive constant~$C_{\star}$ such that, for all~$\tau > 0$ and~$n = 2, \ldots, N_{\tau}$,
\begin{equation*}
\tau_n/\tau_{n - 1} \le C_{\star}.
\end{equation*}
\end{assumption}

For each~$\tau > 0$, let~$X_{\tau}$ be a subspace of~$X$. Given~$\ell \in \N$, we define the following spaces:
\begin{alignat*}{3}
\Vt^{\ell}(X_{\tau}) & := \big\{ v \in L^1(0, T; X_{\tau}) \ : \ v(\cdot, t)_{|_{\In}} \in \Pp{\ell}{\In} \otimes X_{\tau} \text{ for~$n = 1, \ldots, N_{\tau}$}\big\}, \\
\Vt^{\ell, \cont}(X_{\tau}) & := \Vt^{\ell}(X_{\tau}) \cap C^0([0, T]; X_{\tau}),
\end{alignat*}
where~$\Pp{\ell}{\In}$ is the space of polynomials of degree at most~$\ell$ defined on~$\In$, and~$\otimes$ denotes the algebraic product for vector spaces. 

We also define the time jumps~$(\jumpn{\cdot})$ for piecewise smooth scalar functions in time~$(v)$ as follows: 
\begin{equation*}
    \jumpn{v}(\cdot) := v(\cdot, \tn^{+}) - v(\cdot, \tn^{-}) \quad  \text{ for~$n = 1, \ldots, N_{\tau}$, }
\end{equation*}
where
\begin{equation*}
    v(\cdot, \tn^+) := \lim_{\varepsilon \to 0^+} v(\cdot, \tn + \varepsilon) \quad \text{ and } \quad v(\cdot, \tn^{-}) := \lim_{\varepsilon \to 0^+} v(\cdot, \tn - \varepsilon).
\end{equation*}

\subsection{Time reconstruction operator} 
\label{sec:time-reconstruction}
We now recall the definition and some properties of the time reconstruction operator introduced in~\cite[\S2.1]{Makridakis_Nochetto:2006}. 
To do so, we resort to the pointwise-in-time definition in~\cite[Eq.~(33)]{Schotzau_Wihler:2010} (see also~\cite[Lemmas 6 and 7]{Schotzau_Wihler:2010}).

For any Banach space~$Z$ and~$\ell \in \N$, we define the lifting operator~$\calL_{\tau} : Z \to \Vt^{\ell}(Z)$ and the time reconstruction operator~$\Rt : \Vt^{\ell}(Z) \to \Vt^{\ell + 1, \cont}(Z)$ as follows: for~$n = 1, \ldots, N_{\tau}$,
\begin{alignat*}{3}
\calL_{\tau}(z)(\cdot, t) & := \frac{z(\cdot)}{\tau_n} \sum_{i = 0}^{\ell} (-1)^i(2i + 1) L_{n, i}(t) & & \qquad \text{for all~$z \in Z$ and~$t \in \In$},  \\
\Rt \vt(\cdot, t) & := \vt(\cdot, t) - \jump{\vt}_{n - 1}(\cdot) + \int_{\tnmo}^t \calL_{\tau}(\jump{\vt}_{n - 1})(\cdot, s) \ds  & & \qquad \text{for all~$\vt \in \Vt^{\ell}(Z)$ and~$t \in \overline{\In}$}, 
\end{alignat*}
where~$L_{n, i}$ is the~$i$th \emph{mapped Legendre polynomial} defined on the interval~$\In$, and we have set~$\jump{\vt}_0(\cdot) := \vt(\cdot, 0) - v_0$ for some ``initial datum" $v_0 \in Z$. 
The explicit dependence of~$\Rt \vt$ on~$v_0$ will be neglected. The continuity in time of~$\Rt$ was proven in~\cite[Lemma~2.1]{Makridakis_Nochetto:2006}, and the following result from~\cite[Lemma 2.2]{Makridakis_Nochetto:2006} provides an estimate of the error between a piecewise polynomial function in time and its continuous-in-time reconstruction. 

\begin{lemma}[Properties of~$\Rt$]
\label{lemma:Rt}
For any Banach space~$Z$, $\ell \in \N$, and~$p \in [1, \infty]$, there exists a positive constant~$\CR$ depending only on~$p$ and~$\ell$ such that
\begin{equation*}
\Norm{\Rt \vt - \vt}{L^p(0, T; Z)} \le \CR \Big(\sum_{n = 1}^{N_{\tau}} 
\tau_n \Norm{\jump{\vt}_{n-1}}{Z}^p\Big)^{1/p} \qquad \text{for all } \vt \in \Vt^{\ell}(Z),
\end{equation*}
with the usual conventions for the case~$p = \infty$.
\end{lemma}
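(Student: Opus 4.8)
The plan is to exploit the fact that, on each time interval~$\In$, the reconstruction error~$\Rt \vt - \vt$ is merely a fixed scalar function of~$t$ multiplied by the single element~$\jump{\vt}_{n-1} \in Z$. This reduces the Banach-space-valued estimate to a one-dimensional computation, and crucially keeps the argument valid for arbitrary~$Z$. First I would substitute the definition of the lifting operator~$\calL_{\tau}$ into the definition of~$\Rt$ to obtain, for~$t \in \In$,
\[
(\Rt \vt - \vt)(\cdot, t) = \jump{\vt}_{n-1}(\cdot)\, \phi_n(t), \qquad
\phi_n(t) := -1 + \frac{1}{\tau_n}\int_{\tnmo}^{t} \sum_{i=0}^{\ell} (-1)^i (2i+1)\, L_{n,i}(s)\ds,
\]
where the~$Z$-dependence is carried entirely by~$\jump{\vt}_{n-1}$. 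By homogeneity of the norm this gives the pointwise identity~$\Norm{(\Rt\vt - \vt)(\cdot, t)}{Z} = |\phi_n(t)|\, \Norm{\jump{\vt}_{n-1}}{Z}$.

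Next I would split the~$L^p$ norm across the intervals of~$\Tt$ and pull out the~$Z$-norm of each jump, yielding
\[
\Norm{\Rt \vt - \vt}{L^p(0, T; Z)}^p = \sum_{n=1}^{\Nt} \Norm{\jump{\vt}_{n-1}}{Z}^p \int_{\In} |\phi_n(t)|^p \dt .
\]
It then only remains to show that~$\int_{\In} |\phi_n(t)|^p \dt \le \CR^p\, \tau_n$ with~$\CR$ depending solely on~$p$ and~$\ell$. I would do this via the affine change of variables~$t = \tnmo + \tau_n \hat{t}$ onto the reference interval~$(0, 1)$. Under this map the mapped Legendre polynomials~$L_{n,i}$ become the fixed reference Legendre polynomials~$\hat{L}_i$, while the Jacobian~$\tau_n$ from~$\mathrm{d}s$ exactly cancels the prefactor~$1/\tau_n$ in~$\phi_n$. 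Hence~$\phi_n(t) = \hat{\phi}(\hat{t})$ for the single~$n$-independent polynomial~$\hat{\phi}(\hat{t}) := -1 + \int_0^{\hat{t}} \sum_{i=0}^{\ell}(-1)^i(2i+1)\hat{L}_i(\hat{s})\,\mathrm{d}\hat{s}$, so that~$\int_{\In}|\phi_n|^p\dt = \tau_n \Norm{\hat{\phi}}{L^p(0,1)}^p$. Taking~$\CR := \Norm{\hat{\phi}}{L^p(0,1)}$, which depends only on~$p$ and~$\ell$, then summing and extracting~$p$-th roots gives the claim. The case~$p = \infty$ follows identically, replacing the integral by~$\sup_{t \in \In}|\phi_n(t)| = \Norm{\hat{\phi}}{L^\infty(0,1)}$ per the stated conventions.

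The only genuinely delicate point is the transformation step: I must verify that the mapped Legendre polynomials transform cleanly under the affine map and, above all, that the~$\tau_n^{-1}$ scaling built into~$\calL_{\tau}$ is precisely compensated by the Jacobian, so that~$\hat{\phi}$ is truly independent of~$n$ and, in particular, of the (possibly strongly graded) local step~$\tau_n$. Should one prefer to bypass the explicit constant~$\Norm{\hat{\phi}}{L^p(0,1)}$, a cruder but entirely elementary route also works: since~$|L_{n,i}(t)| \le 1$ on~$\In$, one has the pointwise bound~$|\phi_n(t)| \le 1 + \sum_{i=0}^{\ell}(2i+1) = 1 + (\ell+1)^2$, which already yields~$\int_{\In}|\phi_n|^p\dt \le (1+(\ell+1)^2)^p\,\tau_n$ and hence an admissible (if non-sharp) constant depending only on~$\ell$.
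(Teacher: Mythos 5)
Your argument is correct: the identity $(\Rt\vt-\vt)(\cdot,t)=\jump{\vt}_{n-1}(\cdot)\,\phi_n(t)$ on $\In$ follows directly from the definitions, the affine scaling to the reference interval does cancel the $\tau_n^{-1}$ factor in $\calL_{\tau}$ so that $\hat\phi$ is independent of $n$, and the constant $\CR=\Norm{\hat\phi}{L^p(0,1)}\le 1+(\ell+1)^2$ depends only on $p$ and $\ell$ as required. Note that the paper itself gives no proof of this lemma --- it is quoted from Makridakis and Nochetto (Lemma~2.2) --- and your scaling argument is essentially the standard derivation underlying that reference, so there is nothing to compare beyond confirming its validity.
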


\subsection{Discrete compactness\label{sec:compactness}}
Henceforth, we assume that the degree of approximation~$\ell \in \N$, the final time~$T > 0$, and the ``initial datum" $u_0 \in X$ are fixed. We are now in a position to prove our discrete compactness result; see Remarks~\ref{rem:initial} and~\ref{rem:stab-Xt} for some extensions. 

\begin{theorem}[Discrete Aubin--Lions--Simon compactness]
\label{thm:main}
Let~$X$, $B$, and~$Y$ be Banach spaces such that the embedding~$X \hookrightarrow B$ is compact, and the embedding~$B \hookrightarrow Y$ is continuous. Let also~$u_0 \in X$, and~$\{\Tt\}_{\tau > 0}$ be a family of partitions of~$(0, T)$ with~$\tau \to 0^+$ satisfying Assumption~\ref{asm:local-quasi-uniformity}. For each~$\tau$, let~$\ut \in \Vt^{\ell}(X_{\tau})$ for some subspace~$X_{\tau}$ of~$X$. 

Assume that
\begin{enumerate}[label = (h\arabic*), ref = (h\arabic*)]
\item \label{h1} $\{\ut\}_{\tau > 0}$ is uniformly bounded in~$L^p(0, T; X)$ for some~$p \in [1, \infty]$.
\item \label{h2} $\{\dpt \Rt \ut\}_{\tau > 0}$ is uniformly bounded in~$L^r(0, T; Y)$ for some~$r \in [1, \infty]$.
\item \label{h3} There exists a positive constant~$\Cj$ independent of~$\tau$ such that~$\sum_{n = 1}^{N_{\tau}} \Norm{\jump{\ut}_{n - 1}}{B}^2 \le \Cj$ for all~$\tau > 0$.
\end{enumerate}
Then, there hold
\begin{enumerate}[label = (\roman*), ref = (\roman*)]
\item \label{i)} If~$1 \le p < \infty$, $\{\ut\}_{\tau > 0}$ is relatively compact in~$L^p(0, T; B)$.
\item \label{ii)} If~$p = \infty$ and~$r  > 1$, there exists a subsequence of~$\{\ut\}_{\tau > 0}$ that converges strongly in~$L^q(0, T; B)$ for all~$1 \le q < \infty$ to a function belonging to~$C^0([0, T]; B)$.
\item \label{iii)} If conditions~\eqref{eq:amann-1}--\eqref{eq:amann-3} hold, $\{\ut\}_{\tau > 0 }$ is relatively compact in~$L^q(0, T; B)$ for all~$1 \le q < rp/((1 - \theta) (1 - r) p + \theta r)$ provided that~$(r - 1)p/r \le \theta/(1 - \theta)$.
\end{enumerate}
\end{theorem}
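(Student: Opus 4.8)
The plan is to treat the continuous-in-time reconstructions $\Rt\ut$ as proxies for the discontinuous iterates $\ut$. Since $\Rt\ut\in\Vt^{\ell+1,\cont}(X_\tau)\subset C^0([0,T];X)$, the family $\{\Rt\ut\}_{\tau>0}$ is a genuine subset of $L^p(0,T;X)$ on which the continuous compactness results apply. The proof then rests on two pillars: (A) relative compactness of $\{\Rt\ut\}_{\tau>0}$ in the appropriate Bochner space, obtained from Theorems~\ref{thm:aubin-lions-simon} and~\ref{thm:amann}; and (B) the estimate $\Norm{\Rt\ut-\ut}{L^q(0,T;B)}\to0$ as $\tau\to0^+$. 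These combine to transfer compactness to $\{\ut\}_{\tau>0}$: from any sequence $\tau_k\to0^+$, pillar~(A) extracts a subsequence along which the reconstructions converge strongly to some limit $w$; since pillar~(B) makes $\Rt\ut-\ut$ vanish in the same space, the triangle inequality forces the corresponding $\ut$ to converge to $w$ as well.

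For pillar~(A) I must verify conditions~\eqref{eq:calU} and~\eqref{eq:dpt-calU} for $\{\Rt\ut\}$. Condition~\eqref{eq:dpt-calU}, boundedness of $\{\dpt\Rt\ut\}$ in $L^r(0,T;Y)$, is precisely hypothesis~\ref{h2}. Condition~\eqref{eq:calU}, boundedness in $L^p(0,T;X)$, follows by writing $\Rt\ut=\ut+(\Rt\ut-\ut)$, controlling the first term by~\ref{h1} and the second by Lemma~\ref{lemma:Rt} with $Z=X$ --- provided the resulting jump sum $\sum_n\tau_n\Norm{\jump{\ut}_{n-1}}{X}^p$ is itself bounded by $\Norm{\ut}{L^p(0,T;X)}^p$; this bound is the crux and is addressed below. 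Granting it, each conclusion follows by applying a continuous theorem to $\{\Rt\ut\}$ and then invoking pillar~(B): the first bullet of Theorem~\ref{thm:aubin-lions-simon} yields~\ref{i)}; its second bullet (where $r>1$ is needed) yields~\ref{ii)}; and, under~\eqref{eq:amann-1}--\eqref{eq:amann-3}, Theorem~\ref{thm:amann} used through its first option (the derivative bound, i.e.\ $s=1$) yields~\ref{iii)}, since for $s=1$ the range $rp/((1-\theta)(1-sr)p+\theta r)$ and the proviso $(sr-1)p/r\le\theta/(1-\theta)$ reduce exactly to those claimed.

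Pillar~(B) uses Lemma~\ref{lemma:Rt} with $Z=B$, which gives $\Norm{\Rt\ut-\ut}{L^q(0,T;B)}\le\CR\big(\sum_n\tau_n\Norm{\jump{\ut}_{n-1}}{B}^q\big)^{1/q}$, and I show the right-hand side decays as $\tau\to0^+$ using only the summability~\ref{h3}. Setting $a_n:=\Norm{\jump{\ut}_{n-1}}{B}$ so that $\sum_n a_n^2\le\Cj$, I split into two regimes. For $q\ge2$ the uniform bound $a_n\le\Cj^{1/2}$ gives $\sum_n\tau_n a_n^q\le\tau\,\Cj^{(q-2)/2}\sum_n a_n^2\le\tau\,\Cj^{q/2}$. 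For $1\le q<2$, Hölder's inequality with exponents $2/(2-q)$ and $2/q$, together with $\tau_n\le\tau$ and $\sum_n\tau_n=T$, gives $\sum_n\tau_n a_n^q\le T^{(2-q)/2}\Cj^{q/2}\tau^{q/2}$. In both cases the bound vanishes like a positive power of $\tau$, so $\Norm{\Rt\ut-\ut}{L^q(0,T;B)}\to0$ for every finite $q$; this is what lets~\ref{i)} conclude at $q=p<\infty$, and~\ref{ii)}--\ref{iii)} at all the finite exponents $q$ claimed.

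The main obstacle is the $L^p(0,T;X)$ bound on the reconstructions invoked in pillar~(A): hypothesis~\ref{h3} controls the jumps only in the weaker norm $\Norm{\cdot}{B}$, whereas Lemma~\ref{lemma:Rt} with $Z=X$ demands control of $\Norm{\jump{\ut}_{n-1}}{X}$. I resolve this by a polynomial inverse estimate. Since $\jump{\ut}_{n-1}=\ut(t_{n-1}^+)-\ut(t_{n-1}^-)$ is built from endpoint values of polynomials of degree $\le\ell$, a scaling argument on a reference interval gives $\Norm{\ut(t_{n-1}^+)}{X}\le C\,\tau_n^{-1/p}\Norm{\ut}{L^p(I_n;X)}$ and an analogous bound for $\ut(t_{n-1}^-)$ on $I_{n-1}$; Assumption~\ref{asm:local-quasi-uniformity} then converts $\tau_{n-1}^{-1/p}$ into $C_{\star}^{1/p}\tau_n^{-1/p}$, so that $\tau_n\Norm{\jump{\ut}_{n-1}}{X}^p\lesssim\Norm{\ut}{L^p(I_n;X)}^p+\Norm{\ut}{L^p(I_{n-1};X)}^p$. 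Summing over $n$ (each interval is counted at most twice, and the $n=1$ term involving $u_0$ contributes a fixed constant) yields $\sum_n\tau_n\Norm{\jump{\ut}_{n-1}}{X}^p\lesssim\Norm{\ut}{L^p(0,T;X)}^p$; this is the one and only place where Assumption~\ref{asm:local-quasi-uniformity} is needed. For the endpoint $p=\infty$ in~\ref{ii)} the same estimate holds without the negative powers of $\tau_n$, giving an $L^\infty(0,T;X)$ bound on the reconstructions, so that the $C^0([0,T];B)$ conclusion of Theorem~\ref{thm:aubin-lions-simon} is available before passing to the finite exponents $q$ via pillar~(B).
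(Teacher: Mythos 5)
Your proposal is correct and follows essentially the same route as the paper's proof: bound $\Rt\ut$ in $L^p(0,T;X)$ via Lemma~\ref{lemma:Rt}, an inverse-trace estimate on the jumps, and Assumption~\ref{asm:local-quasi-uniformity}; apply Theorems~\ref{thm:aubin-lions-simon} and~\ref{thm:amann} to the reconstructions; and transfer compactness back to $\ut$ by showing $\Norm{\Rt\ut-\ut}{L^q(0,T;B)}\to0$ from~\ref{h3}, with the same split at exponent~$2$. The only (immaterial) difference is in case~\ref{ii)}, where you conclude directly from the $C^0$ limit of the reconstructions rather than extracting a second subsequence of $\{\ut\}$ and identifying the two limits as the paper does.
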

\begin{proof}
We split the proof into three parts. 

\paragraph{Part I) Uniform boundedness of~$\{\Rt \ut\}_{\tau > 0}$.} 
We first show that the time reconstruction~$\Rt \ut$ inherits the uniform boundedness of~$\ut$ in~$L^p(0, T; X)$. Below, we focus on the case~$1 \le p < \infty$, but the case~$p = \infty$ follows analogously. 

Using the triangle inequality and Lemma~\ref{lemma:Rt}, we obtain
\begin{alignat}{3}
\nonumber
\Norm{\Rt \ut}{L^p(0, T; X)} & \le \Norm{\ut}{L^p(0, T; X)} + \Norm{\Rt \ut - \ut}{L^p(0, T; X)} \\
\label{eq:triangle-Rt}
& \le \Norm{\ut}{L^p(0, T; X)} + \CR \Big(\sum_{n = 1}^{N_{\tau}} \tau_n \Norm{\jump{\ut}_{n-1}}{X}^p\Big)^{1/p}.
\end{alignat}
By a standard inverse-trace inequality (see, e.g., \cite[Lemma 12.8]{Ern_Guermond-I:2020}) extended to Bochner 
spaces, there exists a positive constant~$\Ctr$ independent of~$\Tt$ such that
\begin{alignat*}{3}
\Norm{\jump{\ut}_{n - 1}}{X} & \le \Ctr \big(\tau_n^{-1/p} \Norm{\ut}{L^p(\In; X)} + \tau_{n-1}^{-1/p} \Norm{\ut}{L^p(\Inmo; X)}\big) \quad \text{ for~$n = 2, \ldots, N_{\tau}$}, \\
\Norm{\jump{\ut}_0}{X} & \le \Ctr \tau_1^{-1/p}\Norm{\ut}{L^p(I_1; X)} +  \Norm{u_0}{X},
\end{alignat*}
which, together with the triangle inequality and Assumption~\ref{asm:local-quasi-uniformity}, implies
\begin{alignat}{3}
    \nonumber
    \Norm{\Rt \ut - \ut}{L^p(0, T; X)} & \le \CR \bigg[\sum_{n = 2}^{N_{\tau}} \big(\Ctr \Norm{\ut}{L^p(\In; X)} + \Ctr (\tau_n/\tau_{n - 1}) \Norm{\ut}{L^p(\Inmo; X)} \big)^p \\
    \nonumber
    & \quad + \big(\Ctr \Norm{\ut}{L^p(I_1; X)} + \tau_1 \Norm{u_0}{X} \big)^p\bigg]^{1/p}\\
    \label{eq:diff-Rt-ut}
    & \le \CR \big(\Ctr(1 + C_{\star}) \Norm{\ut}{L^p(0, T; X)} + \tau_1^{1/p} \Norm{u_0}{X}\big).
\end{alignat}
The uniform boundedness of~$\{\Rt \ut\}_{\tau > 0}$ in~$L^p(0, T; X)$ then follows from hypothesis~\ref{h1} by combining inequalities~\eqref{eq:triangle-Rt} and~\eqref{eq:diff-Rt-ut}.

\paragraph{Part II) Relative compactness of~$\{\Rt \ut\}_{\tau > 0}$.}
Since~$\{\dpt \Rt \ut\}_{\tau > 0}$ is uniformly bounded in~$L^r(0, T; Y)$ by hypothesis~\ref{h2}, and~$\{\Rt \ut\}_{\tau > 0}$ is uniformly bounded in~$L^p(0, T; X)$ by Part I) of this proof, we can use Theorems~\ref{thm:aubin-lions-simon} and~\ref{thm:amann} to deduce that the relative compactness results~\ref{i)} and~\ref{iii)} hold replacing~$\{\ut\}_{\tau > 0}$ by~$\{\Rt \ut\}_{\tau > 0}$. Moreover, if~$p = \infty$ and~$r > 1$, the sequence~$\{\Rt \ut\}_{\tau > 0}$ is relatively compact in~$C^0([0, T]; B)$ by Theorem~\ref{thm:aubin-lions-simon}.

\paragraph{Part III) Relative compactness of~$\{\ut\}_{\tau > 0}$.}
We focus on cases~\ref{i)} and~\ref{ii)}, as case~\ref{iii)} can be proven analogously. 

\paragraph{Proof of case~\ref{i)}.} Assume that~$p \in [1, \infty)$ and~$r \in [1, \infty]$.
From Part II) of this proof, there exists a subsequence, still denoted by~$\{\Rt \ut\}_{\tau > 0}$, that converges strongly in~$L^p(0, T; B)$ to some function~$u^* \in L^p(0, T; B)$. Using the triangle inequality and Lemma~\ref{lemma:Rt}, we have
\begin{alignat}{3}
\nonumber
\Norm{\ut - u^*}{L^p(0, T; B)} & \le \Norm{\ut - \Rt \ut}{L^p(0, T; B)} + \Norm{\Rt \ut - u^*}{L^p(0, T; B)} \\
\label{eq:triangle-ut-u*}
& \le \CR \Big(\sum_{n = 1}^{N_{\tau}}\tau_n \Norm{\jump{\ut}_{n-1}}{B}^p \Big)^{1/p} + \Norm{\Rt \ut - u^*}{L^p(0, T; B)}.
\end{alignat}
Therefore, it only remains to show that the first term on the right-hand side of~\eqref{eq:triangle-ut-u*} converges to~$0$ as~$\tau \to 0^+$. 

For~$p \geq 2$, the following bound can be obtained from hypothesis~\ref{h3} and the standard inequality for vector norms~$\Norm{\bx}{p} \le \Norm{\bx}{2}$:
\begin{equation*}
\CR \Big(\sum_{n = 1}^{N_{\tau}}\tau_n \Norm{\jump{\ut}_{n-1}}{B}^p \Big)^{1/p} \le \CR \sqrt{\Cj} \tau^{1/p} \to 0 \quad \text{as~$\tau \to 0^+$}. 
\end{equation*}

As for the case~$1 \le p < 2$, we use the H\"older inequality (with~$\gamma = 2/(2-p)$ and~$\gamma' = 2/p$) to get
\begin{alignat*}{3}
\CR \Big(\sum_{n = 1}^{N_{\tau}}\tau_n \Norm{\jump{\ut}_{n-1}}{B}^p \Big)^{1/p} & \le \CR \bigg[\Big(\sum_{n = 1}^{N_{\tau}} \tau_n^{\gamma/\gamma} \Big)^{1/\gamma}\Big(\sum_{n = 1}^{N_{\tau}} \tau_n^{(1 - \frac{1}{\gamma})\gamma'}\Norm{\jump{\ut}_{n - 1}}{B}^{\gamma' p} \Big)^{1/{\gamma'}} \bigg]^{1/p} \\
& \le \CR T^{\frac{2-p}{2p}}\tau^{1/2} \Big(\sum_{n = 1}^{N_{\tau}} \Norm{\jump{\ut}_{n - 1}}{B}^2 \Big)^{1/2}  \le \CR T^{\frac{2-p}{2p}} \sqrt{\Cj} \tau^{1/2} \to 0 \quad \text{as~$\tau \to 0^+$}.
\end{alignat*}
This completes the proof of case~\ref{i)}.

\paragraph{Proof of case~\ref{ii)}.} For this case, we proceed similarly as in~\cite[Thm.~1]{Dreher_Jungel:2012}. 

Assume that~$p = \infty$ and~$r > 1$. From Part II) of this proof, there exists a subsequence, still denoted by~$\{\Rt \ut\}_{\tau > 0}$, that converges strongly in~$C^0([0, T]; B)$ to some function~$u^{\star} \in C^0([0, T];B)$. Moreover, using case~\ref{i)} with~$p = q \in [1, \infty)$ instead of~$p = \infty$, there exists another subsequence, still denoted by~$\{\ut\}_{\tau > 0}$, which converges strongly in~$L^q(0, T; B)$ to some function~$\widehat{u} \in L^q(0, T; B)$. Using the triangle inequality, for all~$\tau > 0$ in the intersection of the two subsequences, we have
\begin{alignat}{3}
\nonumber
    \Norm{u^{\star} - \widehat{u}}{L^q(0, T; B)} & \le \Norm{u^{\star} - \Rt \ut}{L^q(0, T; B)} + \Norm{\Rt \ut - \ut}{L^q(0, T; B)} + \Norm{\ut - \widehat{u}}{L^q(0, T; B)} \\
    \label{eq:ustar-hatu}
    & \le T^{1/q} \Norm{u^{\star} - \Rt \ut}{C^0([0, T]; B)} + \Norm{\Rt \ut - \ut}{L^q(0, T; B)} + \Norm{\ut - \widehat{u}}{L^q(0, T; B)}.
\end{alignat}
All the terms on the right-hand side of~\eqref{eq:ustar-hatu} 
converge to~$0$ as~$\tau \to 0^+$, which implies that~$u^{\star}(\cdot, t) = \widehat{u}(\cdot, t)$ in~$B$ for a.e. $t \in (0, T)$. 
\end{proof}

\subsection{Insights into parabolic problems\label{sec:parabolic}}
We now give a brief account of how the assumptions of Theorem~\ref{thm:main} can be verified for the DG time discretization of parabolic problems. 
Assume that~$X \hookrightarrow B \hookrightarrow X'$ is a Gelfand triplet, where 
$X$ is a separable reflexive Banach space with dense compact embedding~$X \hookrightarrow B$, and~$B$ is a separable Hilbert space with inner product~$(\cdot, \cdot)_B$ and norm~$\Norm{\cdot}{B}$. 

Given~$p \in (1, \infty)$ with~$p' := p/(p - 1)$, a (non)linear operator~$A : X \to X'$, and a nonlinear source term~$f : X \to B$, we consider the following first-order evolution equation: find~$u \in W^p(0, T; X) := \{v \in L^p(0, T; X)\, : \, \dpt v \in L^{p'}(0, T; X')$\} such that
\begin{subequations}
\label{eq:abstract-parabolic}
\begin{alignat}{3}
\int_0^T \langle \dpt u, v\rangle_{X' \times X} \dt  + \int_0^T \langle A(u), v \rangle_{X' \times X} \dt & = \int_0^T \big(f(u) , v\big)_B \dt  & & \qquad \text{for all~$v \in L^p(0, T; X)$}, \\
\label{eq:abstract-parabolic-2}
u(\cdot, 0) & = u_0 & & \qquad \text{in~$B$},
\end{alignat}
\end{subequations}
where~$\langle\cdot, \cdot \rangle_{X' \times X}$ denotes the duality product between~$X'$ and~$X$, and the continuous embedding~$W^p(0, T; X) \hookrightarrow C^0([0, T]; B)$ (see, e.g., \cite[Lemma 7.1]{Roubicek:2013}) ensures that~\eqref{eq:abstract-parabolic-2} makes sense.

For a given discrete subspace~$X_{\tau}$ of~$X$, and a degree of approximation~$\ell \in \N$, the DG time discretization of the abstract evolution problem~\eqref{eq:abstract-parabolic} reads: find~$\ut \in \Vt^{\ell}(X_{\tau})$ such that, for all~$\vt \in \Vt^{\ell}(X_{\tau})$,
\begin{equation}
\label{eq:DG-time}
\begin{split}
    \sum_{n = 1}^{N_{\tau}} \Big( \int_{\In} (\dpt \ut, \vt)_B \dt & + \big(\jump{\ut}_{n-1}, \vt(\cdot, \tnmo^+) \big)_B \Big) + \int_0^T a(\ut, \vt) \dt = \int_0^T (f(\ut),  \vt)_{B} \dt,
\end{split}
\end{equation}
where~$a : X \times X \to \R$ is defined by~$a(u, v) := \langle A(u), v\rangle_{X' \times X}$ for all~$u, v \in X$. 

If there is a positive constant~$C_a$ such that~$a(u, u) \geq C_a \Norm{u}{X}^p$ for all~$u \in L^p(0, T; X)$, and~$f(\cdot)$ satisfies an appropriate continuity or growth condition, then one can obtain a stability bound of the form
\begin{equation*}
    \frac12 \Big(\Norm{\ut(\cdot, T)}{B}^2 + \sum_{n = 1}^{N_{\tau}} \Norm{\jump{\ut}_{n-1}}{B}^2 \Big) + C_a \Norm{\ut}{L^p(0, T; X)}^p \le C(T, u_0, f),
\end{equation*}
thus verifying hypotheses~\ref{h1} and~\ref{h3}. As for hypothesis~\ref{h2}, we can use the equivalent definition in~\cite[Eq.~(13)]{Makridakis_Nochetto:2006} of~$\Rt \ut$ to rewrite~\eqref{eq:DG-time} as follows: find~$\ut \in \Vt^{\ell}(X_{\tau})$ such that, for all~$\vt \in \Vt^{\ell}(X_{\tau})$,
\begin{equation}
\label{eq:short}
\begin{split}
    \int_0^T (\dpt \Rt \ut, \vt)_B \dt & = \sum_{n = 1}^{N_{\tau}} \Big( \int_{\In} (\dpt \ut, \vt)_B \dt  + \big(\jump{\ut}_{n-1}, \vt(\cdot, \tnmo^+) \big)_B \Big)  = \int_0^T \langle \calF_{\tau}, \vt\rangle_{X_{\tau}' \times X_{\tau}} \dt,
\end{split}
\end{equation}
where~$\langle \cdot, \cdot \rangle_{X_{\tau}' \times X_{\tau}}$ denotes the duality product between~$X_{\tau}'$ and~$X_{\tau}$, and the functionals~$\{\calF_{\tau}\}_{\tau>0}$, stemming from the terms involving~$a(\cdot, \cdot)$ and~$f(\cdot)$,  are assumed to satisfy that the norms $\{\Norm{\calF_{\tau}}{L^{p'}(0,T;X_\tau')}\}_{\tau>0}$ are uniformly bounded.

We denote by $\Pi_{X_{\tau}} : B \to X_{\tau}$ the~$B$-orthogonal projection onto~$X_{\tau}$, and assume that its restriction to~$X$ is stable in the~$X$ norm (i.e., $\Norm{\Pi_{X_{\tau}} \phi}{X} \lesssim \Norm{\phi}{X}$ for all~$\phi \in X$). We also denote by~$\pi_t : L^1(0, T) \to \Vt^{\ell}(\Tt)$ the~$L^2(0, T)$-orthogonal projection onto~$\Vt^{\ell}(\Tt)$, and define the mixed projection~$\Pi_{X_{\tau}}^t := \pi_t \circ \Pi_{X_{\tau}}$. 
Then, since~$\dpt \Rt \ut \in \Vt^{\ell}(X_{\tau})$, 
it follows from equation~\eqref{eq:short} and the orthogonality properties of~$\pi_t$ and~$\Pi_{X_{\tau}}$ that, for any~$\phi \in L^{p}(0, T; X)$, 
\begin{equation}
\label{eq:stab-dpt-Rt-ut}
\begin{split}
    \int_0^T (\dpt \Rt \ut, \phi)_{B} \dt = \int_0^T (\dpt \Rt \ut, \Pi_{X_{\tau}}^t \phi)_B \dt 
    & = \int_0^T \langle \calF_{\tau}, \Pi_{X_{\tau}}^t \phi \rangle_{X_{\tau}' \times X_{\tau}} \dt  \\
    & \le \Norm{\calF_{\tau}}{L^{p'}(0, T; X_{\tau}')} \Norm{\Pi_{X_{\tau}}^t\phi}{L^{p}(0, T; X)},
\end{split}
\end{equation}
which, combined with the stability properties of~$\pi_t$ (see, e.g., \cite[Thm.~18.16(ii)]{Ern_Guermond-I:2020}) and the stability of~$\Pi_{X_{\tau}}$ in the~$X$ norm, implies that the sequence~$\{\dpt \Rt \ut\}_{\tau > 0}$ is uniformly bounded in~$L^{p'}(0, T; X')$, thus verifying~\ref{h2} with~$Y = X'$ and~$r = p'$. Therefore, provided that~$u_0 \in X$, the sequence~$\{\ut\}_{\tau > 0}$ is relatively compact in~$L^q(0, T; B)$ for all~$1 \le q < \infty$ by Theorem~\ref{thm:main} case~\ref{iii)} with~$r = p'$ and~$\theta = 1/2$; cf. \cite[Thm.~3.1(2)]{Walkington:2010}.

We conclude this section with remarks on some extensions of Theorem~\ref{thm:main}.
\begin{remark}[Initial datum~$u_0 \in B$] 
\label{rem:initial}
In Part I) of the proof of Theorem~\ref{thm:main}, we have used~$u_0 \in X$ to show that the time reconstructions~$\{\Rt \ut\}_{\tau > 0}$ are uniformly bounded in~$L^p(0, T; X)$. On the other hand, the continuous problem just requires~$u_0 \in B$. 
This seems to be a limitation of the argument based on time reconstructions when general subspaces~$X_{\tau} \subset X$ are considered. 
Since our aim is the numerical approximation of the solution to evolution problems, we focus on the practically relevant case of discrete finite element subspaces~$X_{\tau}$, which are characterized by a spatial meshsize parameter~$h > 0$. 

In this finite element setting, the results in Theorem~\ref{thm:main} for~$1 \le p < \infty$ can still be obtained as follows.
Let~$h_{\min}$ be the minimum element diameter, and assume an inverse estimate of the form
$$\Norm{\phi_h}{X} \le \Cinv h_{\min}^{-s} \Norm{\phi_h}{B} \quad \forall \phi_h \in X_{\tau},$$ 
for some~$s > 0$ and a positive constant~$\Cinv$ independent of~$h$ and~$\tau$. Then, take~$\Pi_{X_{\tau}} u_0$ as the initial datum of~$\Rt \ut$ and impose the scaling
$$\tau_1 \le \Csca h_{\min}^{sp},$$ 
for some positive constant~$\Csca$ independent of~$h$ and~$\tau$. 
The corresponding
term~$\tau_1^{1/p} \Norm{\Pi_{X_{\tau}} u_0}{X}$ in Part~I) of the proof of Theorem~\ref{thm:main} can then be bounded  by
\begin{equation*}
\tau_1^{1/p} \Norm{\Pi_{X_{\tau}} u_0}{X} \le C_{\mathrm{inv}} \tau_1^{1/p} h_{\min}^{-s} \Norm{\Pi_{X_{\tau}} u_0}{B} \le \Cinv \Csca \Norm{u_0}{B},
\end{equation*}
which yields the uniform boundedness of~$\{\Rt \ut\}_{\tau > 0}$ in~$L^p(0, T; X)$ when~$u_0 \in B$. The mild condition~$\tau_1 \le \Csca h_{\min}^{sp}$ is required on the first time step only  if~$u_0 \not\in X$, and just determines the scaling of~$\tau_1$ with respect to~$h_{\min}$. Moreover, this is not a CFL condition, as the constant~$\Csca$ does not have to be ``sufficiently small". 

For the classical setting with~$p = 2$, $X = H_0^1(\Omega)$, $B = L^2(\Omega)$, and~$Y = H^{-1}(\Omega)$ (where~$\Omega$ is a given spatial domain), the well-known inverse estimate (see, e.g., \cite[Lemma~12.1 and Eq.~(22.39)]{Ern_Guermond-I:2020})
$$\Norm{\phi_h}{H_0^1(\Omega)} \le \Cinv h_{\min}^{-1} \Norm{\phi_h}{L^2(\Omega)} \quad \forall \phi_h \in X_{\tau},$$ 
leads to the parabolic-type restriction~$\tau_1 \le \Csca h_{\min}^2$. 
This is reasonable, as one has to accurately approximate the  singular behavior of the initial-layer that results when~$u_0$ only belongs to~$L^2(\Omega)$. Such a parabolic scaling is also motivated by interpolation estimates for time-singular parabolic solutions; see \cite{Stevenson_Storn:2023}. Consequently, we consider this restriction to be mild compared to the global quasi-uniformity of the time partitions required in previous works.
\eremk
\end{remark}

\begin{remark}[Stability of~$\Pi_{X_{\tau}}$ and further extensions]
\label{rem:stab-Xt}
The requirement of the stability of the $B$-orthogonal projection~$\Pi_{X_{\tau}}$ in the~$X$ norm arises from the fact that condition~\eqref{eq:dpt-calU} requires a uniform bound of~$\dpt \calU$ in~$L^r(0, T; Y)$ for a fixed Banach space~$Y$. To avoid such a constraint, one can rely on the following simplified version of~\cite[Prop.~9]{Chen_Jungel_Liu:2014} (cf.~\cite[Thm.~C.8 in App. C]{Droniou_etal:2018}): let~$\{X_m\}_{m \in \N}$ and~$\{Y_m\}_{m \in \N}$ be sequences of Banach spaces with~$X_m \subset Y_m$ such that
\begin{enumerate}[label = \alph*), ref = \alph*)]
    \item \label{a)} for any set of functions~$\{\phi_m\}_{m \in \N}$ with~$\phi_m \in X_m$, a uniform bound of~$\Norm{\phi_m}{X_m}$ implies that~$\{\phi_m\}_{m \in \N}$ is relatively compact in~$B$;
    \item \label{b)} $B \hookrightarrow Y_m$ continuously for all~$m \in \N$, with uniform continuity constant;
    \item \label{c)} for some~$ p \in [1, \infty]$, $u_m \in L^p(0, T; X_m)$ for all~$m \in \N$, and~$\Norm{u_m}{L^p(0, T; X_m)}$ is bounded uniformly in~$m \in \N$;
    \item \label{d)}$\Norm{\sigma_{\delta} u_m - u_m}{L^p(0, T - \delta; Y_m)} \to 0$ as~$\delta \to 0$, uniformly in~$m \in \N$.
\end{enumerate}
Then, $\{u_m\}_{m \in \N}$ is relatively compact in~$L^p(0, T; B)$ (if~$1 \le p < \infty$), or in~$C^0([0, T]; B)$ (if~$p = \infty$). 

This result can be easily used to improve Theorem~\ref{thm:main} in two directions:
\begin{itemize}
    \item \textbf{Nonconforming spatial discretizations:} conditions~\ref{a)} and~\ref{c)} no longer require that~$X_{\tau} \subset X$ for some fixed Banach space~$X$. In particular, condition~\ref{a)} can be verified for nonconforming finite element spaces by using known compact embeddings (see, e.g., \cite[\S5]{Buffa_Ortner:2009} and~\cite[\S6.2]{DiPietro_Ern:2010}).
 
    \item \textbf{Relaxing hypothesis~\ref{h2}:} assuming instead that~$\{\Norm{\dpt \Rt \ut}{L^r(0, T; Y_{\tau})}\}_{\tau > 0}$ is uniformly bounded for some~$r \in [1, \infty]$, and~$\{Y_{\tau}\}_{\tau > 0}$ are some Banach spaces with~$B \hookrightarrow Y_{\tau}$ continuously with uniform continuity constant. 
    This is possible as~\cite[Lemma 4]{Simon:1987} states that, for all~$\delta > 0$,
    \begin{equation*}
        \Norm{\sigma_{\delta} \Rt \ut - \Rt \ut }{L^p(0, T-\delta; Y_{\tau})} \le \begin{cases}
            \delta^{1 + 1/p - 1/r} \Norm{\dpt \Rt \ut}{L^r(0, T; Y_\tau)} & \text{if~$r \le p \le \infty$}, \\
            \delta T^{1/p - 1/r} \Norm{\dpt \Rt \ut}{L^r(0, T; Y_{\tau})} & \text{if~$1 \le p \le r$}.
        \end{cases}
    \end{equation*}
    Therefore, the sequence~$\{\Rt \ut\}_{\tau > 0}$ satisfies condition~\ref{d)} for any~$1 \le p < \infty$ (if~$r \in [1, \infty]$), and for~$p = \infty$ (if~$r > 1$).
    Consequently, one can avoid the stability assumption of the~$B$-orthogonal projection~$\Pi_{X_{\tau}}$ in the~$X$ norm, which we used in bound~\eqref{eq:stab-dpt-Rt-ut}.
\end{itemize}
This further extends the applicability of Theorem~\ref{thm:main}.
\eremk
\end{remark}

\section{Concluding remarks}
We have presented a compactness result for high-order piecewise polynomial functions in time, which serves as a discrete counterpart of the Aubin--Lions--Simon lemma and of Amann's theorem on compactness of higher regularity.
By exploiting the properties of a time reconstruction operator, we have circumvented the need for equicontinuity arguments and the assumption of quasi-uniform time partitions  required in earlier works. The case of nonsmooth initial data can be handled in the finite element setting under a suitable scaling condition between the first time step~$\tau_1$ and the spatial meshsize~$h$.
Our approach is expected to extend naturally to nonconforming spatial discretizations and to contribute to the analysis of discontinuous Galerkin time discretizations of complex nonlinear evolution problems.

\paragraph{Acknowledgements.}
The author thanks Keegan Kirk (George Mason University) for pointing out reference~\cite[App.~C]{Droniou_etal:2018}.
This research was partially supported by the European Union (ERC Synergy, NEMESIS, project number
101115663). Views and opinions expressed are, however, those of the author only and do not necessarily
reflect those of the EU or the ERC Executive Agency.
The author is also member of the INdAM-GNCS group.

\end{document}